\documentclass{birkjour}

\usepackage{amssymb, amsmath, amsfonts, epsfig, color,amscd,amsthm,setspace,lineno,labelfig}

\newtheorem{theorem}{Theorem}[section]

\newtheorem{lemma}[theorem]{Lemma}

\theoremstyle{definition}

\begin{document}

\title{Gaps in the space of skeletal signatures}

\author{James W Anderson}
\address{Mathematical Sciences\\University of Southampton\\Southampton SO17 1BJ England}
\email{j.w.anderson@southampton.ac.uk}

\author{Aaron Wootton}
\address{Department of Mathematics\\University of Portland\\Portland OR 97203 USA}
\email{wootton@up.edu}

\thanks{This work was supported in part by EPSRC grant EP/K033239/1.  Both authors would like to thanks the other's institution for its hospitality during the work that led to this paper.}

\subjclass{14H37, 30F20}
\keywords{Riemann surface, automorphism, signature, mapping class group, skeletal signature}

\begin{abstract} Skeletal signatures were introduced in \cite{AndWoo} as a tool to describe the space of all signatures with which a group can act on a surface of genus $\sigma \geq 2$. In the present paper we provide a complete description of the gaps that appear in the space of skeletal signatures, together with proofs of the conjectures posed in \cite{AndWoo}. 
\end{abstract}

\maketitle

\section{Introduction and preliminaries}
\label{introduction}

In \cite{AndWoo}, the authors introduced the notion of a {\em skeletal signature} for the action of a finite group on a closed Riemann surface of genus $\sigma\ge 2$, together with the resultant space $\mathcal{K}_\sigma$ of skeletal signatures, as tools for understanding the space of all signatures of all actions of finite groups on closed Riemann surfaces of a fixed genus $\sigma\ge 2$.   The study of automorphism groups of Riemann surfaces and the signatures of such actions has been the focus of much study since the early days of the development of Riemann surfaces, and remains a vibrant area of activity.  We do note that most of the current activity focuses on properties of particular groups or families of groups, rather than on the genus of the surface being acted upon.

To set  notation, let $X$ be a closed Riemann surface of genus $\sigma\ge 2$, and let $G$ be a (necessarily finite) group acting on $X$ by conformal homeomorphisms.   Let $(h; n_1,\ldots, n_r)$ be the {\em signature} of the action of $G$ on $X$, by which we mean that the quotient surface orbifold $X/G$ has genus $h$ and the orbifold covering $X\rightarrow X/G$ is branched over $r$ points with orders $n_1,\ldots, n_r$.  

To the signature $(h; n_1,\ldots, n_r)$, we associate the {\em skeletal signature} $(h,r)$, which is the ordered pair consisting of the genus of the quotient surface orbifold and the number of branch points.  Define $\mathcal{K}_\sigma$ to be the set of all skeletal signatures associated to all groups acting on all closed Riemann surfaces of a fixed genus $\sigma$. 

The main reason for the interest in these skeletal signatures is that they provide a reasonable summary of the information contained in the corresponding full signatures, and skeletal signatures are significantly more tractable than full signatures.  One demonstration of this is contained below, in our description of the well-formed gaps that appear in the space of skeletal signatures for all genera.  To our knowledge, no corresponding result is known for full signatures.  

The main significance of these gaps is that if the point $(h_0,r_0)$ cannot be realized as a skeletal signature for genus $\sigma$, then we are able to exclude a large number of potential signatures.  Specifically, there cannot exist any group $G$ acting on any closed Riemann surface $X$ of genus $\sigma$ for which the action of $G$ on $X$ has signature $(h_0; n_1,\ldots, n_{r_0})$ for any $n_1,\ldots, n_{r_0}$. 

Referring the reader to Figure \ref{fig:example}, one curious and notable feature of $\mathcal{K}_\sigma$ is the existence of well-formed gaps in which no skeletal signatures appear.  While we present only $\mathcal{K}_{48}$ in this Figure, such gaps appear in all genera.  The data from which this Figure was constructed arises from the work of Breuer \cite{Breu} and his classification of all signatures arising from actions of groups on all closed Riemann surfaces of genera $2\le \sigma\le 48$.  This data is available via the {\bf genus} package in GAP \cite{GAP}.  Indeed, it was from our attempts to visualize Breuer's data that led us to skeletal signatures initially.

The main technical result used in the descriptions of the gaps and of the proofs of the Conjectures from \cite{AndWoo} is the Riemann-Hurwitz formula.  Remarkably, although satisfaction of the Riemann-Hurwitz formula is not sufficient to guarantee the existence of a group action yielding a given signature, current evidence suggests that satisfaction of the Riemann-Hurwitz formula is nearly always sufficient to guarantee the existence of a group action yielding a given skeletal signature.  We will explore this in detail in future work \cite{AndWooskeletal}.

While we focus primarily on the gaps that arise for all genera,  we will also discuss in Section \ref{sporadic r=1} some results for sporadic points, which are those points in the $(h,r)$-plane which correspond to skeletal signatures for infinitely many genera and which do not arise as the skeletal signatures for infinitely many genera as well.  In particular, we resolve Conjecture 3.15 from \cite{AndWoo} by showing that all skeletal signatures $(h,1)$ are sporadic for all $h\ge 2$.  

We close this Introduction by stating the Riemann-Hurwitz formula.  Let $X$ be a compact Riemann surface of genus $\sigma\ge 2$, let $G$ be a group acting on $X$ by conformal homeomorphisms, and suppose that the signature of the action of $G$ on $X$ is $(h; n_1,\ldots, n_r)$.  We then have that 
\begin{eqnarray}
\sigma -1 = |G| \left( h - 1+\frac{r}{2} - \frac{1}{2} \sum_{j=1}^r \frac{1}{n_j} \right).
\label{riemann hurwitz}
\end{eqnarray}

\section{Triangles of skeletal signatures in the $(h,r)$-plane}
\label{triangles possibilities}

Fix a genus $\sigma\ge 2$.  In this Section, we describe the gaps in the $(h,r)$-plane, showing that their existence is forced by the Riemann-Hurwitz formula.  By a {\em gap}, we mean a subset of the $(h,r)$-plane, determined in terms of $\sigma$, which contains no points of the space $\mathcal{K}_\sigma$ of skeletal signatures for genus $\sigma$.   

We begin with the following  construction.  Take an integer $N\ge 2$ and consider the Riemann-Hurwitz formula for the action of any group $G$ of order $N$ on a closed Riemann surface $X$ of genus $\sigma\ge 2$, where $G$ acts on $X$ with signature $(h; n_1,\ldots, n_r)$.   In particular, we do not pay particular attention to any aspect of $G$ other than its order $|G| =N$. 

The ramifications orders $n_j$ of the covering $X\rightarrow X/G$ trivially satisfy the inequalities $2\le n_j\le N = |G|$ for all $1\le j\le r$.
(Though we do not need to do so here, we note that it is straightforward to refine this estimate for a specific group $G$, for instance by bounding the ramification orders by the minimum and maximum orders of non-trivial elements of $G$.)  Substituting this into equation (\ref{riemann hurwitz}) and rearranging, we see for a fixed $h$ that
\begin{eqnarray}
\frac{2(\sigma -1 +N(1-h))}{N-1}\le r\le \frac{4 (\sigma -1 + N(1-h))}{N}. 
\label{two lines}
\end{eqnarray}
We interpret these inequalities as giving, for the fixed $\sigma\ge 2$ and for the given $h\ge 0$, the $r$-coordinates for all of the skeletal signatures $(h,r)$ that might arise from the action of some group of order $N$ on $X$.  That is, we are covering the space ${\mathcal K}_\sigma$ of skeletal signatures by these vertical slices.

Equation (\ref{two lines}) yields a pair of lines.  The left-hand inequality yields the line 
\[ L^\sigma_N = \{ 2\sigma -2 + 2N = (N-1)r +2N h\} \]
and the right-hand side yields the line 
\[ U^\sigma_N = \{ 4(N +\sigma-1) = N r + 4 Nh\}.\]
Calculating, we see that $L^\sigma_N$ and $U^\sigma_N$ intersect at the point $(1 +\frac{\sigma-1}{N}, 0)$.

Define $\mathcal{P}_\sigma (N)$ to be the closed triangular region in the $(h,r)$-plane bounded by the lines   $L^\sigma_N$ and $U^\sigma_N$; that is, we define $\mathcal{P}_\sigma (N)$ to be all those points $(h,r)$ for which 
\[ \frac{2(\sigma -1 +N(1-h))}{N-1}\le r\le \frac{4 (\sigma -1 + N(1-h))}{N} \]
for all $0\le h\le 1+\frac{\sigma-1}{N}$.  We refer to $L^\sigma_N$ as the {\em lower line} for $\mathcal{P}_\sigma (N)$ and to $U^\sigma_N$ as the {\em upper line} for $\mathcal{P}_\sigma (N)$, for the obvious reason that $\mathcal{P}_\sigma (N)$ lies above $L^\sigma_N$ and below $U^\sigma_N$. We note that 
\[ \mathcal{K}_\sigma \subset  \bigcup_{N\ge 2} \mathcal{P}_\sigma (N). \]

For $N=2$, observe that $L^\sigma_2$ and $U^\sigma_2$ coincide, as both are the line $\{ 2(\sigma -1) = r + 4h\}$.  This is not a suprise, given the method we used to generate the lines. In fact, this is a welcome observation, as we know from our previous work that all of the skeletal signatures arising from $C_2$-actions on surfaces of genus $\sigma$ lie on this line.

This observation holds more generally, for any group of the form $(C_p)^n$ for a prime $p\ge 2$ and an integer $n\ge 1$.  As an illustration of how we might refine the argument given above, we give the proof of this observation.

\begin{lemma}  For a prime $p\ge 2$ and an integer $n\ge 1$, suppose that $(C_p)^n$ acts by conformal homeomorphisms on the closed Riemann surface $X$ of genus $\sigma\ge 2$.  The skeletal signature $(h,r)$ of the action of $(C_p)^n$ on $X$ with signature $(h; n_1,\ldots, n_r)$ then lies on the line 
\[ L_\sigma (p,n) = \{  2p^n h +  (p-1) p^{n-1} r = 2p^n - 2 + 2\sigma\}. \]
In particular, the triangle $\mathcal{P}_\sigma ((C_p)^n)$ collapses to the line $L_\sigma (p,n)$. 
\label{single line}
\end{lemma}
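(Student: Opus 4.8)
The plan is to exploit the very special structure of $(C_p)^n$: every non-trivial element has order exactly $p$. More precisely, recall that the stabilizer of any point of $X$ under a group acting by conformal homeomorphisms is cyclic, since such a stabilizer embeds faithfully, via its action on the tangent line at the fixed point, into $\mathbb{C}^*$, and every finite subgroup of $\mathbb{C}^*$ is cyclic. The only cyclic subgroups of $(C_p)^n$ are the trivial subgroup and subgroups isomorphic to $C_p$. Since each ramification order $n_j$ of the covering $X\rightarrow X/(C_p)^n$ is the order of the stabilizer of a point lying above the $j$th branch point, and branch points correspond precisely to non-trivial stabilizers, we conclude that $n_j = p$ for every $1\le j\le r$.

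With this in hand the remainder is a direct computation. Substituting $|G| = p^n$ and $n_j = p$ for all $j$ into the Riemann-Hurwitz formula (\ref{riemann hurwitz}) gives
\[ \sigma - 1 = p^n\left( h - 1 + \frac{r}{2} - \frac{r}{2p}\right) = p^n(h-1) + \frac{(p-1)p^{n-1}r}{2}, \]
and clearing denominators and rearranging yields $2p^n h + (p-1)p^{n-1} r = 2p^n - 2 + 2\sigma$, which is exactly the assertion that $(h,r)\in L_\sigma(p,n)$.

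For the final statement, that the triangle collapses: the inequalities (\ref{two lines}) bounding $\mathcal{P}_\sigma(N)$ were obtained solely from the crude estimate $2\le n_j\le N$. Running that same construction for the specific group $(C_p)^n$ — that is, using the sharp equality $n_j = p$ in place of the interval $2\le n_j\le p^n$ — makes the left-hand and right-hand bounds in the analogue of (\ref{two lines}) identical, so the lower line $L^\sigma_{p^n}$ and upper line $U^\sigma_{p^n}$ are replaced by a single line, namely $L_\sigma(p,n)$, and the triangular region $\mathcal{P}_\sigma((C_p)^n)$ degenerates onto it.

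The only step that is not pure bookkeeping is the identification of every $n_j$ with $p$, and that rests entirely on the standard fact that point stabilizers for actions on Riemann surfaces are cyclic; once this is invoked, no genuine obstacle remains, and I expect the proof to be short.
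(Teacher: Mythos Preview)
Your proof is correct and follows essentially the same approach as the paper: observe that every ramification order must equal $p$ because all non-trivial elements of $(C_p)^n$ have order $p$, then substitute into the Riemann--Hurwitz formula. You supply more justification than the paper does (explicitly invoking cyclicity of point stabilizers and spelling out the collapse of the triangle), but the argument is the same.
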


\begin{proof}  The key to this observation is that all of the non-trivial elements of $(C_p)^n$ have order $p$, and so all of the ramification orders satisfy $n_j = p$.   Equation (\ref{riemann hurwitz}) then becomes
\[ \sigma - 1 = p^n \left(h -1 + \frac{r(p-1)}{2p} \right), \]
which is the line $$L_\sigma (p,n) = \{ 2p^n h +  (p-1) p^{n-1} r = 2p^n - 2 + 2\sigma\}. \eqno\qedhere$$  
\end{proof}

To understand how gaps arise in the $(h,r)$-plane for a fixed genus $\sigma\ge 2$, we need to understand how the $\mathcal{P}_\sigma (N)$ behave relative to one another as $N$ varies.  We start by noting that the upper lines $U^\sigma_N$ are all parallel to one another over all values of $N$, as all have slope $-4$.  

Also, for a fixed $\sigma\ge 2$, the lower lines $L^\sigma_N$ are almost parallel as $N$ varies, as the slope of $L^\sigma_N$ is $\frac{N-1}{2N} = \frac{1}{2} -\frac{1}{2N}$.  Interestingly, the lower lines $L^\sigma_N$ all pass through the point $(h,r) = (\sigma, 2-2\sigma) = (2, \chi(X))$; since $\sigma\ge 2$, this immediately yields that the $L^\sigma_N$ are disjoint in the $(h,r)$-plane as $N$ varies.  However, we have not yet seen any particular consequence of the fact that the $L^\sigma_N$ all pass through the same point. 

Moreover, as $N$ increases, we have that the $U^\sigma_N$ and the $L^\sigma_N$ move to the left.  Combining these facts, we have the following observation regarding the behaviours of $\mathcal{P}_\sigma (M)$ and $\mathcal{P}_\sigma (N)$ for $M< N$. 

\begin{lemma} Fix a genus $\sigma\ge 2$ and let $3\le M < N$ be  integers. 
\begin{enumerate}
\item $\mathcal{P}_\sigma (M)$ lies strictly above the lower line $L^\sigma_N$ of $\mathcal{P}_\sigma (N)$.  
\item $\mathcal{P}_\sigma (N)$ lies strictly below the upper line $U^\sigma_M$ of $\mathcal{P}_\sigma (M)$.  
\end{enumerate}
\label{overlap of triangles}
\end{lemma}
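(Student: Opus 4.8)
The plan is to reduce both parts to the same elementary observation: at a fixed height $h$ in the relevant range, the $r$-coordinate of the lower line $L^\sigma_N$, and likewise that of the upper line $U^\sigma_N$, is a strictly decreasing function of $N$. Granting this, each assertion becomes a one-line comparison of a point of one triangle against the appropriate bounding line of the other triangle at the same value of $h$.

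First I would rewrite the bounding lines by solving for $r$ in terms of $h$, which puts the $N$-dependence into a single term:
\[ L^\sigma_N:\ \ r = 2(1-h) + \frac{2(\sigma-h)}{N-1}, \qquad U^\sigma_N:\ \ r = 4(1-h) + \frac{4(\sigma-1)}{N}. \]
From the first expression one reads off (again) that every $L^\sigma_N$ passes through $(\sigma,2-2\sigma)$, and that for $h<\sigma$ the $r$-value of $L^\sigma_N$ at $h$ strictly decreases as $N$ grows; from the second, that the $U^\sigma_N$ are parallel of slope $-4$ and that the $r$-value of $U^\sigma_N$ at any $h$ strictly decreases as $N$ grows, since $\sigma\ge 2$ forces $\sigma-1\ge 1>0$.

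For part (1), take $(h,r)\in\mathcal{P}_\sigma(M)$, so $0\le h\le 1+\frac{\sigma-1}{M}$; because $M\ge 3$ and $\sigma\ge 2$ we have $1+\frac{\sigma-1}{M}<\sigma$, hence $\sigma-h>0$ throughout the $h$-range of $\mathcal{P}_\sigma(M)$. By definition of $\mathcal{P}_\sigma(M)$, the value $r$ is at least the $r$-value of $L^\sigma_M$ at $h$, which by the monotonicity above strictly exceeds the $r$-value of $L^\sigma_N$ at $h$; thus $(h,r)$ lies strictly above $L^\sigma_N$. For part (2), take $(h,r)\in\mathcal{P}_\sigma(N)$, so $r$ is at most the $r$-value of $U^\sigma_N$ at $h$, which is strictly less than the $r$-value of $U^\sigma_M$ at $h$; thus $(h,r)$ lies strictly below $U^\sigma_M$. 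Here no constraint on $h$ is needed, since the upper lines are parallel.

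I do not expect a real obstacle; the only point needing attention is the strictness of the inequalities. In part (2) this rests solely on $\sigma-1>0$. In part (1) it rests on the $h$-interval of $\mathcal{P}_\sigma(M)$ lying strictly to the left of $h=\sigma$, the common point of all the lower lines $L^\sigma_N$; this is precisely where the hypothesis $M\ge 3$ enters (in fact $M\ge 2$ already suffices for the geometric content, the case $M=2$ being the degenerate triangle of Lemma~\ref{single line}).
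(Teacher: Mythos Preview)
Your proof is correct and follows essentially the same approach as the paper: both arguments rest on the monotonicity of the bounding lines in $N$, using that the lower lines $L^\sigma_N$ all meet at $(\sigma,2-2\sigma)$ (so for $h<\sigma$ they are linearly ordered) and that the upper lines $U^\sigma_N$ are parallel. The paper phrases this geometrically and rather tersely, while you make the monotonicity explicit by solving each line for $r$ and isolating the $N$-dependence in a single term; your version is the more detailed of the two, and in particular your check that the $h$-range of $\mathcal{P}_\sigma(M)$ stays strictly to the left of $h=\sigma$ makes the strictness in part~(1) transparent.
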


\begin{proof} The proof of both parts of this Lemma follows immediately from the analysis that led to the definition of $\mathcal{P}_\sigma (N)$.  
\begin{enumerate}
\item The lower lines of $\mathcal{P}_\sigma (M)$ and $\mathcal{P}_\sigma(N)$, while not disjoint, intersect at the point $(h,r) = (\sigma, 2-2\sigma)$, which lies below the $h$-axis in the $(h,r)$-plane.  This forces $\mathcal{P}_\sigma (M)$ to lie in the complement of the lower line of $\mathcal{P}_\sigma (N)$.  The statement then follows from the observation that the lower line for $\mathcal{P}_\sigma (M)$ lies above the lower line for $\mathcal{P}_\sigma (N)$.
\item The upper lines of $\mathcal{P}_\sigma (M)$ and $\mathcal{P}_\sigma (N)$ are parallel, as both have slope $-4$, and hence are disjoint.   This forces $\mathcal{P}_\sigma (N)$ to lie in the complement of the upper line of $\mathcal{P}_\sigma (M)$.   The statement then follows from the observation that the upper line for $\mathcal{P}_\sigma (N)$ lies below the upper line for $\mathcal{P}_\sigma (M)$.
\end{enumerate}
\end{proof}

As a consequence of this, we see that the $\mathcal{P}_\sigma (N)$ overlap in a saw-tooth pattern, see Figure \ref{fig:example}.  With this in mind, define $\mathcal{G}_\sigma (N, N+1)$ to be the open triangular region in the $(h,r)$-plane bounded by the lines $U^\sigma_{N+1}$ and $L^\sigma_N$, lying to the right of their point $Q^\sigma_{N,N+1}$ of intersection, which we calculate to be 
\[ Q^\sigma_{N,N+1} = \left(\frac{(N-1)^2 +\sigma(N-3)}{(N-2)(N+1)}, \frac{4(\sigma -1)}{(N-2)(N+1)}\right). \]
Similarly, define $\mathcal{G}_\sigma (N,N+2)$ to be the open triangular region in the $(h,r)$-plane bounded by the lines $U^\sigma_{N+2}$ and $L^\sigma_N$, lying to the right of their point $Q^\sigma_{N,N+2}$ of intersection, which we calculate to be 
 \[ Q^\sigma_{N,N+2} = \left(\frac{N^2 -N +\sigma(N-4)}{N^2-4}, \frac{8(\sigma -1)}{N^2 -4}\right). \]

\begin{theorem} Fix a genus $\sigma \ge 2$ and let $N\ge 3$ be an integer.  In the case that $N+1$ is not prime, the region $\mathcal{G}_\sigma (N,N+1)$ is a gap in the $(h,r)$-plane, so that the intersection $\mathcal{G}_\sigma (N,N+1)\cap \mathcal{K}_\sigma$ is empty.

In the case that $N+1$ is prime, the region $\mathcal{G}_\sigma (N,N+2)$ is a gap in the $(h,r)$-plane,  with the exception of the points within this region lying on the line $L_\sigma (N+1,1)$ corresponding to the cyclic group of order $N+1$.  
\label{main gap theorem}
\end{theorem}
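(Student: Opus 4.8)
\noindent\emph{Proof plan.} The plan is to derive everything from the two structural lemmas above, Lemma~\ref{single line} and Lemma~\ref{overlap of triangles}, together with the containment $\mathcal{K}_\sigma\subseteq\bigcup_{m\ge 2}\mathcal{P}_\sigma(m)$ noted earlier. A point of $\mathcal{K}_\sigma$ is the skeletal signature of some action of a group $G$ of order $m=|G|\ge 2$, hence lies in $\mathcal{P}_\sigma(m)$ and satisfies $h\ge 0$, $r\ge 0$; if in addition $m$ is prime then $G\cong C_m$, so by Lemma~\ref{single line} the point lies on the line $L_\sigma(m,1)$. It therefore suffices to show, for each integer $m\ge 2$ separately, that $\mathcal{P}_\sigma(m)$ meets the relevant gap region only inside the exceptional set (which is empty when $N+1$ is not prime). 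I would first record the half-plane description of the gap regions: by construction a point of $\mathcal{G}_\sigma(N,N+1)$ lies strictly on the side of $L^\sigma_N$ not containing $\mathcal{P}_\sigma(N)$ and strictly on the side of $U^\sigma_{N+1}$ not containing $\mathcal{P}_\sigma(N+1)$, and these two open half-plane conditions describe exactly the open wedge $\mathcal{G}_\sigma(N,N+1)$ (the ``right of $Q^\sigma_{N,N+1}$'' clause being a reflection of the relative slopes of the two lines); the same holds for $\mathcal{G}_\sigma(N,N+2)$ with $U^\sigma_{N+2}$ in place of $U^\sigma_{N+1}$.

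Suppose first that $N+1$ is not prime. I would show $\mathcal{P}_\sigma(m)\cap\mathcal{G}_\sigma(N,N+1)\cap\{r\ge 0\}=\varnothing$ for every $m\ge 2$, splitting into four ranges. For $3\le m<N$, Lemma~\ref{overlap of triangles}(1) puts all of $\mathcal{P}_\sigma(m)$ strictly on the $\mathcal{P}_\sigma(N)$-side of $L^\sigma_N$, hence outside the gap. For $m=N$ the same conclusion holds, since $L^\sigma_N$ is the lower line of $\mathcal{P}_\sigma(N)$, so $\mathcal{P}_\sigma(N)$ lies in the closed half-plane on its own side of $L^\sigma_N$. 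For $m=N+1$, $\mathcal{P}_\sigma(N+1)$ lies in the closed half-plane on its own side of its upper line $U^\sigma_{N+1}$, again outside the gap. For $m\ge N+2$, Lemma~\ref{overlap of triangles}(2) with $M=N+1$ puts all of $\mathcal{P}_\sigma(m)$ strictly on the $\mathcal{P}_\sigma(N+1)$-side of $U^\sigma_{N+1}$, outside the gap. The one remaining value is $m=2$: here $\mathcal{P}_\sigma(2)$ coincides with the line $L^\sigma_2=L_\sigma(2,1)$, which passes through the common point $(\sigma,2-2\sigma)$ of all the lower lines $L^\sigma_N$; since that point has $r=2-2\sigma<0$ and $L^\sigma_2,L^\sigma_N$ are distinct, the portion of $\mathcal{P}_\sigma(2)$ with $r\ge 0$ lies entirely on one open side of $L^\sigma_N$, and a direct check at a single point (e.g. $(\tfrac{\sigma+1}{2},0)$, where $\mathcal{P}_\sigma(2)$ meets $\{r=0\}$, using $\sigma\ge 2$) shows this is the $\mathcal{P}_\sigma(N)$-side. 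Since $\mathcal{K}_\sigma\subseteq\bigcup_{m\ge 2}\mathcal{P}_\sigma(m)$ and $\mathcal{K}_\sigma\subseteq\{r\ge 0\}$, this gives $\mathcal{G}_\sigma(N,N+1)\cap\mathcal{K}_\sigma=\varnothing$.

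Now suppose $N+1$ is prime; the argument is the same, with $\mathcal{G}_\sigma(N,N+2)$ and $U^\sigma_{N+2}$ replacing $\mathcal{G}_\sigma(N,N+1)$ and $U^\sigma_{N+1}$. The ranges $m=2$ and $3\le m\le N$ go through word for word, since the gap is still bounded on one side by $L^\sigma_N$; the ranges $m=N+2$ and $m\ge N+3$ go through using Lemma~\ref{overlap of triangles}(2) with $M=N+2$. The only genuinely new case is $m=N+1$: now primality of $N+1$ forces $G\cong C_{N+1}$, so by Lemma~\ref{single line} every point of $\mathcal{K}_\sigma$ coming from a group of order $N+1$ lies on $L_\sigma(N+1,1)$, which is exactly the exceptional line allowed in the statement. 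Combining the cases gives $\mathcal{G}_\sigma(N,N+2)\cap\mathcal{K}_\sigma\subseteq L_\sigma(N+1,1)$.

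The only step that is not a purely formal consequence of Lemma~\ref{overlap of triangles} is the degenerate case $m=2$, because that lemma is stated only for indices $\ge 3$ and $\mathcal{P}_\sigma(2)$ is a line rather than a genuine triangle; this is the part I expect to need the most care, although it reduces cleanly to the fact that $\mathcal{P}_\sigma(2)$ and every lower line $L^\sigma_N$ pass through the single point $(\sigma,2-2\sigma)$ below the $h$-axis. I would also point out that the hypothesis ``$N+1$ not prime'' is never used in showing that $\mathcal{G}_\sigma(N,N+1)$ itself is a gap --- that is true for all $N\ge 3$ --- but that it is the natural case division, since it is exactly when $N+1$ is prime that $\mathcal{P}_\sigma(N+1)$ contributes no skeletal signatures away from the single line $L_\sigma(N+1,1)$, and this is what makes it possible to enlarge the gap from $\mathcal{G}_\sigma(N,N+1)$ to $\mathcal{G}_\sigma(N,N+2)$.
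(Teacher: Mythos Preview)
Your proof is correct and follows the same approach as the paper's: both deduce the gaps from Lemma~\ref{overlap of triangles} together with the containment $\mathcal{K}_\sigma\subseteq\bigcup_{m\ge 2}\mathcal{P}_\sigma(m)$, with Lemma~\ref{single line} handling the prime case via the collapse of order-$(N+1)$ contributions to the line $L_\sigma(N+1,1)$. Your version is considerably more explicit about the case analysis --- in particular you treat the degenerate case $m=2$ separately and spell out why primality forces $G\cong C_{N+1}$, details the paper glosses over --- but the underlying strategy is identical.
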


\begin{proof} The proof follows immediately by considering the relationship between the three triangular regions $\mathcal{P}_\sigma (N)$, $\mathcal{P}_\sigma (N+1)$ and $\mathcal{P}_\sigma (N+2)$.   For both  statements, the equations of the point of intersection follows immediately from the equations of the relevant lines, while the existence of the gaps follows from Lemma \ref{overlap of triangles}.   

For the second statement, the only additional point is that the middle triangle $\mathcal{P}_\sigma (N+1)$ collapses to the line $L_\sigma (N+1,1)$, which passes through the region $\mathcal{G}_\sigma (N,N+2)$ and so determines points of $\mathcal{K}_\sigma$ that lie in $\mathcal{G}_\sigma (N,N+2)$.
\end{proof}

The proof of Theorem \ref{main gap theorem} makes use of the region bounded by the lower line of $\mathcal{P}_\sigma (N)$ and the upper line of $\mathcal{P}_\sigma (N+1)$, which intersect at the point $Q^\sigma_{N,N+1}$, and so determine a pair of regions.  The gap consists of those points strictly between the two lines and to the right of the point of intersection, while the points between the two lines and to the left of the point of intersection are  skeletal signatures $(h,r)$ that correspond signatures for both order $N$ and order $N+1$ actions on some closed Riemann surface of genus $\sigma$. 

We close this Section by noting that when $N$ divides $\sigma -1$, the point $\left(  \frac{\sigma-1}{N} +1, 0\right)$ at which $\mathcal{P}_\sigma (N)$ meets $\{ r=0\}$ is always the skeletal signature of some group action of order $N$ on a surface of genus $\sigma$ (indeed, a cyclic group action, see \cite{Har1}). In fact, the point $\left(  \frac{\sigma-1}{N} +1, 0\right)$ is a skeletal signature for {\bf all } groups of order $N$ provided $\left(  \frac{\sigma-1}{N} +1\right)\geq n+1$ where $n$ is the largest power of any prime dividing $N$.  This follows using  an argument similar to Hurwitz's original argument that there exists a group action of every finite group on some surface, see \cite[Corollary 3.15]{Breu} and from the fact that the cardinality of a generating set for such a group is bounded by $n+1$, see \cite{Gur}.

\section{Gaps in the $(h,r)$-plane}
\label{gap conjectures}

The purpose of this Section is to consider the parts of the Conjectures from \cite{AndWoo} related to the existence of regular gaps in the $(h,r)$-plane for various values of $\sigma$.    We include the relevant parts of the Conjectures from \cite{AndWoo}.  

We recall some notation.  Let $[x]$ be the result of rounding $x$ to the nearest integer.  (This is a minor change from the notation in \cite{AndWoo}, where we originally used $(x)$.)  A {\em persistently missing point} $(h_0, r_0)$ in the $(h,r)$-plane is a point satisfying $(h_0, r_0)\not\in \mathcal{K}_\sigma$ for all $\sigma\ge \sigma_0$ for some constant $\sigma_0$.  

\medskip
\noindent
{\bf \cite[Conjecture 3.12]{AndWoo}} {\em For $\sigma\ge 9$, let $E_\sigma$ be the line with slope $-3$ passing through $(1, \sigma -1)$ and let $D_\sigma$ be the line with slope $-4$ passing through $(1, \sigma -1)$.  Then no point strictly between $E_\sigma$ and $D_\sigma$ lies in $\mathcal{K}_\sigma$.}

\begin{proof}
Note that $E_\sigma =L^\sigma_3$ and $D_\sigma =U^\sigma_4$ in our current terminology.  Conjecture 3.12  follows immediately from Theorem \ref{main gap theorem}, since the points strictly between $E_\sigma =L^\sigma_3$ and $D_\sigma =U^\sigma_4$ form the gap $\mathcal{G}_\sigma (3,4)$. 
\end{proof}

\medskip
\noindent
{\bf \cite[Conjecture 3.13]{AndWoo}} {\em  The point $(2, [\frac{2}{3}\sigma -4])$ is persistently missing for all $\sigma\ge 7$.}

\medskip
\noindent
{\bf \cite[Conjecture 3.14]{AndWoo}} {\em The points $(3, [\frac{2}{3}\sigma -7])$ and $(3, [\frac{2}{3}\sigma -8])$ persistently missing for all $\sigma\ge 18$.  For $\sigma\equiv 2 \: ({\rm mod}\: 3)$, the point $(3, [\frac{2}{3}\sigma -6])$ is persistently missing for all $\sigma\ge 18$.}

\begin{proof} For both Conjecture 3.13 and Conjecture 3.14, we consider the gap $\mathcal{G}_\sigma (4,6)$, which consists of the points strictly between the lines $L^\sigma_4$ and $U^\sigma_6$, with the exception of those points that lie on the line $L_\sigma (5,1)$.  The equation for $L_4^\sigma$ is $3r + 8h = 2\sigma +6$ and the equation for $U_6^\sigma$ is $3r + 12h = 2\sigma +10$.  

Setting $h=2$ (which is the case of interest in Conjecture 3.13) and solving for $r$,  we see that $(2,r)$ lies in the gap $\mathcal{G}_\sigma (4,6)$ for all 
\[ \frac{2}{3}\sigma - \frac{14}{3} < r < \frac{2}{3}\sigma -\frac{10}{3}.\]
Since 
\[ \frac{2}{3}\sigma - \frac{14}{3} < \left[\frac{2}{3}\sigma -4\right] = \left[\frac{2}{3}\sigma -\frac{12}{3}\right] < \frac{2}{3}\sigma -\frac{10}{3}\]
for all $\sigma\ge 7$, we see that $(2, [\frac{2}{3}\sigma -4])$ lies in $\mathcal{G}_\sigma (4,6)$ for all $\sigma \ge 7$.

Setting $h=3$ (which is the case of interest in Conjecture 3.14) and solving for $r$, we see that $(3,r)$ lies in the gap $\mathcal{G}_\sigma (4,6)$ for all 
\[ \frac{2}{3}\sigma -\frac{26}{3} < r < \frac{2}{3}\sigma -6.\]
Since 
\[ \frac{2}{3}\sigma -\frac{26}{3} < \left[\frac{2}{3}\sigma -8\right] < \left[\frac{2}{3}\sigma -7\right] < \frac{2}{3}\sigma -6 \]
for all $\sigma\ge 18$, we see that $(3,[\frac{2}{3}\sigma -8])$ and $(3, [\frac{2}{3}\sigma -7])$ both lie in $\mathcal{G}_\sigma (4,6)$ for all $\sigma\ge 18$.

This leaves the case in which  $\sigma\equiv 2 \: ({\rm mod}\: 3)$, where we see that 
\[ \frac{2}{3}\sigma -\frac{26}{3} < \left[\frac{2}{3}\sigma -6\right] <  \frac{2}{3}\sigma -6,\]
so that $(3,[\frac{2}{3}\sigma -6])$ lies in $\mathcal{G}_\sigma (4,6)$ for all $\sigma\ge 18$ satisfying $\sigma \equiv 2 \: ({\rm mod}\: 3)$.
\end{proof}

We note that this approach to the description of gaps extends naturally to higher values of $h$ and provides an essentially complete description of the significant gaps that we have seen from the available data.  

\section{The Line $r=1$}
\label{sporadic r=1}

The purpose of this Section is to extend Theorem 3.8 from \cite{AndWoo}, in which we show that the skeletal signature $(1,1)$ is sporadic, to all points of the form $(h,1)$ for $h\ge 1$.  This resolves Conjecture 3.15 from \cite{AndWoo}.  By {\em sporadic}, we mean a skeletal signature $(h_0, r_0)$ for which there are infinitely many genera $\sigma$ with $(h_0,r_0)\in \mathcal{K}_\sigma$ and infinitely many genera $\sigma$ with $(h_0,r_0)\not\in \mathcal{K}_\sigma$.

Let $X$ be a closed Riemann surface of genus $\sigma\ge 2$ and let $G$ be a finite group acting on $X$ by conformal homeomorphisms where the action of $G$ on $X$ has signature $(h; n_1,\ldots, n_r)$. A vector $(a_{1}, b_{1}, a_{2}, b_{2},\dots , a_h, b_h, c_{1},\dots , c_{r})$ of elements of $G$  is an {\em $(h;n_{1},\dots ,n_{r})$-generating vector for $G$} if the following hold:

\begin{enumerate}
\item $G=\langle a_{1},b_{1},a_{2},b_{2},\dots ,a_{h},b_{h},c_{1},\dots ,c_{r} \rangle$;
\item The order of $c_{i}$ is $n_{i}$ for $1\leq i\leq r$;
\item $\prod_{i=1}^{h} [a_{i} ,b_{i} ] \prod_{j=1}^{r} c_{j}$=1.
\end{enumerate}

In \cite{Bro2}, an adapted version of Riemann's existence theorem provides necessary and sufficient conditions for existence of group actions in terms of generating vectors. Specifically, there exists an action of $G$ on $X$ with signature $(h; n_1,\ldots, n_r)$ if and only if both the Riemann-Hurwitz formula is satisfied and there exists an $(h;n_{1},\dots ,n_{r})$-generating vector for $G$.

We are now ready to prove the main result of this Section.

\begin{theorem} For $\sigma\ge 2$, every point $(h,1)$ for $h >1$ is a sporadic point in $\mathcal{K}_\sigma$.
\label{sporadic points}
\end{theorem}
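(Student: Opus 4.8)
The goal is to show that for each fixed $h>1$, the skeletal signature $(h,1)$ lies in $\mathcal{K}_\sigma$ for infinitely many $\sigma$ and lies outside $\mathcal{K}_\sigma$ for infinitely many $\sigma$. The plan is to treat the two halves separately.

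For the "infinitely many $\sigma$ with $(h,1)\in\mathcal{K}_\sigma$" direction, I would exhibit an explicit family of group actions realizing signature $(h;n)$ for a single branch point. The cleanest choice is a cyclic group $G=C_n$ acting with signature $(h;n)$: by the adapted Riemann existence theorem, this requires (i) the Riemann–Hurwitz equation and (ii) an $(h;n)$-generating vector $(a_1,b_1,\dots,a_h,b_h,c_1)$ for $C_n$, i.e. elements with $c_1$ of order $n$ and $\prod[a_i,b_i]\,c_1=1$; since $C_n$ is abelian the commutators vanish, so one simply needs $c_1=1$, which forces $n=1$ — that is degenerate. So instead I would take $r=1$ with a non-abelian $G$, or better, realize $(h;n)$ with $G=C_n\times(\text{surface-group image})$; the standard trick is: choose $G$ generated by $a_1,b_1,\dots,a_h,b_h$ together with a single $c_1$ of order $n$ equal to $\bigl(\prod[a_i,b_i]\bigr)^{-1}$. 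Taking $a_1,b_1$ to generate a non-abelian group in which some commutator has order $n$ (for instance inside a suitable finite quotient of the fundamental group of a genus-$h$ surface, or inside $\mathrm{PSL}_2$ over a finite field) gives such actions for infinitely many $n$, and Riemann–Hurwitz then determines $\sigma-1=|G|(h-1+\tfrac12-\tfrac1{2n})$, which runs through infinitely many values of $\sigma$ as $n$ (and $|G|$) grow. I would fix one such uniform construction and note it produces infinitely many admissible $\sigma$.

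For the "infinitely many $\sigma$ with $(h,1)\notin\mathcal{K}_\sigma$" direction, I would run a Riemann–Hurwitz counting argument. With $r=1$ the formula reads $\sigma-1=|G|\bigl(h-1+\tfrac12-\tfrac1{2n_1}\bigr)=\tfrac{|G|}{2}\bigl(2h-1-\tfrac1{n_1}\bigr)$, where $n_1\mid|G|$ (as $c_1$ has order $n_1$ and lies in $G$). Writing $N=|G|$ and $N=n_1 m$, this becomes $2(\sigma-1)=N(2h-1)-m$, i.e. $2(\sigma-1)+m=N(2h-1)$ with $1\le m\le N/2$ (since $n_1\ge2$). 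So $(h,1)\in\mathcal{K}_\sigma$ forces the existence of integers $N\ge2$, $m\ge1$ with $m\mid N$, $m\le N/2$, and $N(2h-1)=2(\sigma-1)+m$. The point is that $N(2h-1)-2(\sigma-1)=m$ must be a proper divisor of $N$ with $m\le N/2$; as $\sigma$ grows we can choose $\sigma$ so that for every candidate $N$ this divisibility/range condition fails. Concretely, $N(2h-1)\equiv m\pmod{?}$ — better: from $m=N(2h-1)-2(\sigma-1)$ and $m\mid N$ we get $N\mid N(2h-1)-m$ trivially but we need $m\mid N$, equivalently $\bigl(N(2h-1)-2(\sigma-1)\bigr)\mid N$. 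For this to have a solution $N$, since $m\le N/2$ we need $N(2h-1)-2(\sigma-1)\le N/2$, i.e. $N(2h-3/2)\le 2(\sigma-1)$, bounding $N\le \tfrac{2(\sigma-1)}{2h-3/2}$; and $m\ge1$ gives $N\ge\tfrac{2\sigma-1}{2h-1}$. So $N$ lies in a bounded window, and within that window one needs $m=N(2h-1)-2(\sigma-1)$ to divide $N$. I would then choose $\sigma$ (infinitely many, e.g. along an arithmetic progression chosen via CRT to avoid all the finitely many divisibility patterns) so that no $N$ in the window works — the key estimate being that the window $\bigl[\tfrac{2\sigma-1}{2h-1},\tfrac{2(\sigma-1)}{2h-3/2}\bigr]$ has length $O_h(1)$ while $\sigma\to\infty$, so it contains at most a bounded number of integers $N$, each imposing a congruence-type constraint on $\sigma$ that can be simultaneously violated for infinitely many $\sigma$.

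The main obstacle is the second direction: making precise that the finitely many values of $N$ in the bounded window can be simultaneously excluded for infinitely many $\sigma$. One must be careful that the window, though of bounded length, moves with $\sigma$, so the set of "bad" $\sigma$ (those for which some $N$ in the window does give a valid $m\mid N$) is not obviously sparse; the right way is to observe that for fixed $N$, the condition "$N(2h-1)-2(\sigma-1)$ divides $N$ and lies in $[1,N/2]$" restricts $2(\sigma-1)$ to finitely many residues modulo... actually to finitely many values of $N(2h-1)-2(\sigma-1)$, hence pins down $\sigma$ in terms of $N$ and a divisor of $N$ — so for each $N$ only finitely many $\sigma$ in any range $[\sigma, \sigma+C_h]$ are "bad via $N$", and summing over the $O_h(1)$ relevant $N$'s still leaves, in any sufficiently long interval, some $\sigma$ that is bad via no $N$. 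I would also need to double-check the edge cases $n_1=2$ and small $h$, and confirm the realizability construction in the first paragraph genuinely covers infinitely many distinct $\sigma$ (not an arithmetic progression that the exclusion argument might also hit — but since the two statements only need infinitude, not complementary descriptions, this is automatic).
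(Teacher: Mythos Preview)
Your non-existence argument contains a genuine error: the window for $N$ does \emph{not} have length $O_h(1)$. With your own bounds, the window is $\bigl[\tfrac{2\sigma-1}{2h-1},\,\tfrac{4(\sigma-1)}{4h-3}\bigr]$, and a direct computation gives
\[
\frac{4(\sigma-1)}{4h-3}-\frac{2\sigma-1}{2h-1}=\frac{2\sigma-4h+1}{(4h-3)(2h-1)},
\]
which grows linearly in $\sigma$. (For $h=2$ the window is roughly $[\tfrac{2\sigma}{3},\tfrac{4\sigma}{5}]$, of length about $\tfrac{2\sigma}{15}$.) So you are not summing over $O_h(1)$ values of $N$ but over $\Theta(\sigma)$ of them, and the CRT/counting sketch collapses. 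In fact Riemann--Hurwitz together with the divisibility $n_1\mid N$ \emph{is} satisfiable for most $\sigma$: rewriting your relation as $t=2(\sigma-1)/m$ with $(2h-1)\mid(t+1)$ and $t\ge 4h-3$, one sees that whenever $2(\sigma-1)$ has any divisor $t\ge 4h-3$ with $t\equiv -1\pmod{2h-1}$ the numerics go through, and this happens generically. The paper's proof therefore does \emph{not} rely on arithmetic alone: it specialises to $\sigma=p+1$ with $p$ an odd prime, so that $n(2h-1)-1$ (which must divide $2(\sigma-1)=2p$) is forced into $\{1,2,p,2p\}$; the first two give $h=1$, and the remaining two are eliminated by \emph{group-theoretic} obstructions (the single relator $c_1$ must be a nontrivial commutator, forcing $G$ non-abelian, and then an index-$2$ cyclic subgroup argument). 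Your sketch never invokes the generating-vector constraint on the non-existence side, and without it the argument cannot close.

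Your existence direction is also too vague to stand as a proof. You correctly observe that $C_n$ fails because the lone relator would have to be trivial, but ``some non-abelian $G$ with a commutator of order $n$, e.g.\ a quotient of a surface group or $\mathrm{PSL}_2$ over a finite field'' is not a construction: you have not exhibited a single family, verified the generating-vector conditions, or computed the resulting genera. The paper handles this cleanly with the generalised quaternion groups $G_n=\langle x,y\mid x^n=y^2,\ y^{-1}xy=x^{-1}\rangle$, taking the explicit $(h;n)$-generating vector $(x,y,e,\dots,e,\,yx^{-2}y^{-1})$ and reading off $\sigma=2n(2h-1)-1$ for all $n\ge 2$.
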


\begin{proof}
We shall first show that there exists an infinite sequence of genera for which there does not exist a group action with skeletal signature $(h,1)$ on a closed Riemann surface of any genus in this sequence. 

Suppose then that a group $G$ acts with skeletal signature $(h,1)$ on some Riemann surface of genus $\sigma$, so that the action of $G$ on $X$ has signature $(h;n)$ for some $n> 1$ and the Riemann-Hurwitz formula (\ref{riemann hurwitz}) is satisfied.  Solving, we see that
\[  \frac{2n(\sigma -1)}{n(2h-1)-1}=|G|.\]
Since $n(2h-1)-1$ and $n$ are relatively prime, it follows that $n(2h-1)-1$ must divide $2(\sigma -1)$.

Now, set $\sigma=p+1$ where $p$ is an odd prime. Since $n(2h-1)-1$ must divide $2(\sigma -1) =2p$, it follows that one of following four cases must hold, so that either $n(2h-1)-1=1$, $n(2h-1)-1=2$, $n(2h-1)-1=p$ or $n(2h-1)-1=2p$. 

In the first case, we see that  $n=2$ and $h=1$, while in the second case we see that $n=3$ and $h=1$.  In particular, we have that  $h=1$ for both cases, contrary to our initial assumption that $h >1$. 

For the remaining two cases we modify the proof given in \cite{AndWoo}  to prove no such action can exist. First note that if $G$ acts on $X$ with signature $(h;n)$ for some $n\ge 2$, then  there exists an $(h;n)$-generating vector for $G$ as defined above, for which $c_{1}$ is a commutator of $G$ of order $n$ (since $\left( \prod_{i=1}^{h} a_{i}b_{i}a_{i}^{-1}b_{i}^{-1}\right) c_{1} =e_G$). Since $n\geq 2$, it follows that $G$ cannot be abelian. 

For $n(2h-1)-1=p$, we have \[ |G|  =  \frac{2n p}{n (2h-1)-1} =  \frac{2p\left( \frac{p+1}{2h-1} \right)}{\left( \frac{p+1}{2h-1} \right)(2h-1)-1}= 2\left( \frac{p+1}{2h-1} \right)=2n.\] Hence, we see that $G$ has order $2n$ and acts on $X$ with signature $(1;n)$, from which it follows that $G$ has an index $2$ cyclic subgroup $H$ which contains the commutator $c_1$. However, since $n$ is even, we can show this is impossible imitating identically the proof given in \cite{AndWoo}. Hence $(h,1)$ is not a skeletal signature for $G$.

Finally, for the last case, we have $$n=\frac{2p+1}{2h-1}$$ and consequently 
\[ |G|= \frac{2n p}{n_i(2h-1)-1} = \frac{2p\left( \frac{2p+1}{2h-1} \right)}{\left( \frac{2p+1}{2h-1} \right)(2h-1)-1}=\frac{2p\left( \frac{2p+1}{2h-1} \right)}{2p}=\frac{2p+1}{2h-1}=n.\]
Since $G$ contains an element equal to its order, it must be cyclic and hence abelian, which is impossible.  

We have now shown that $(h,1)$ is not the skeletal signature of any group action for any genus of the form $\sigma=p+1$ for any odd prime $p$, of which there are infinitely many.

To complete the proof, we need to construct an infinite sequence of genera for which $(h,1)$ is a skeletal signature. Again we can generalize the proof for the case $(1,1)$ given in \cite{AndWoo}.  For $n\ge 2$, let 
\[ G_{n}=\langle x,y\: |\: x^n=y^2,y^{-1}xy=x^{-1} \rangle\]
denote the generalized quaternion group. Taking $a_{i}=b_{i}=e_{G}$ for $i>1$, the vector 
\[ (x,y,e_{G},e_{G},\dots ,e_{G},yx^{-2}y^{-1})\]
is then  an $(h;n)$-generating vector for $G_{n}$ (since $(x,y,yx^{-2}y^{-1})$ is a $(1,n)$-generating vector for $G_{n}$, as was proved in \cite{AndWoo}). Applying the Riemann-Hurwitz formula (\ref{riemann hurwitz}), it follows that $G_{n}$ acts on a surface of genus $\sigma =2n(2(h-1)+1)-1$. In particular, $(h,1)$ is a skeletal signature for $\sigma =2n(2(h-1)+1)-1$ for all $n\geq 2$.
\end{proof}

\begin{figure}
\begin{center}
\centerline{\epsfig{file =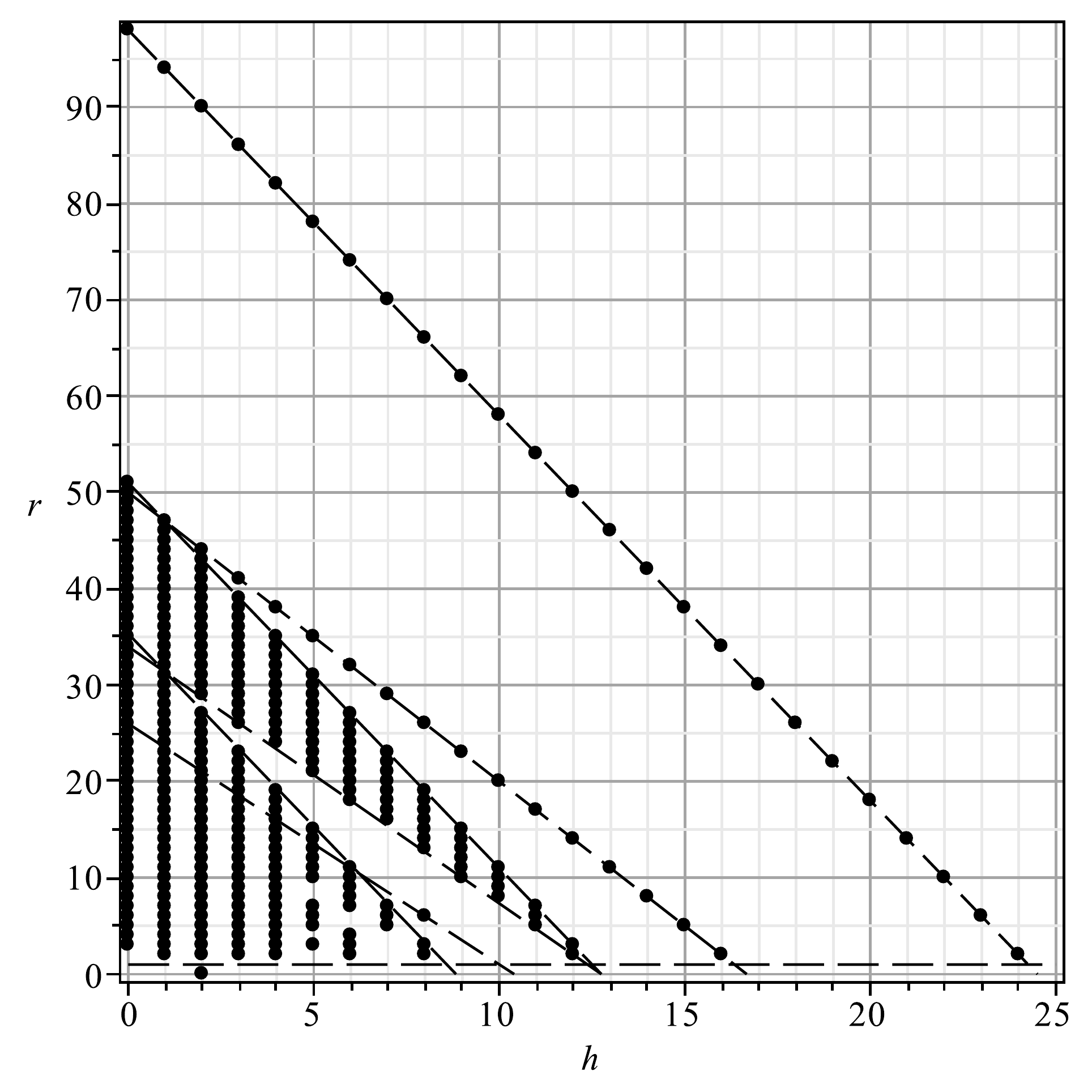, width=12cm, angle= 0}}
\end{center}
\caption{The $(h,r)$-plane for genus $\sigma = 48$.  The top line is the {\em hyperelliptic line} $L_{48}(2,1)$, containing the skeletal signatures for all $C_2$ actions on $X$.   The next two lines are the lines $L_3^{48}$ and below it $U_4^{48}$, bounding the gap $\mathcal{G}_{48}(3,4)$.   Below $U^{48}_4$ is $L^{48}_4$, which together with $U^{48}_4$ bounds the triangular region $\mathcal{P}_{48}(4)$, which here is completely filled by skeletal signatures.  Next is the line $U^{48}_6$, which together with $L^{48}_4$ bounds the gap $\mathcal{G}_{48}(4,6)$, into which intrudes the line $L^{48}(5) = L_{48}(5,1)$ bringing into $\mathcal{G}_{48}(4,6)$ the single skeletal signature $(8,6)$.  We also include the line $r=1$, to illustrate that there are no skeletal signatures of the form $(h,1)$ in genus $48$, as per the proof of Theorem \ref{sporadic points}.  This Figure was produced using MAPLE 15 and the data from the {\bf genus} package in GAP \cite{GAP}.}  
\label{fig:example}
\end{figure}

\end{document}